\titlespacing{\section}{0cm}{3.5pc}{1.5pc}
\def\@citex[#1]#2{\if@filesw\immediate\write\@auxout{\string\citation{#2}}\fi
  \def\@citea{}\@cite{\@for\@citeb:=#2\do
    {\@citea\def\@citea{\@citesep}\@ifundefined
       {b@\@citeb}{{\bf ?}\@warning
       {Citation `\@citeb' on page \thepage \space undefined}}%
{\csname b@\@citeb\endcsname}}}{#1}}
\def\@citesep{; }
\newtheoremstyle{Kang}{}{}{\itshape}{}{\bf}{}{.5em}{}
\theoremstyle{Kang}
\newtheorem{theorem}{Theorem}[section]
\newtheorem{lemma}[theorem]{Lemma}
\newtheoremstyle{Kremark}{}{}{}{}{\bf}{}{.5em}{}
\theoremstyle{Kremark}
\newtheorem*{remark}{Remark.}
\newtheorem{defn}[theorem]{Definition}
\newtheorem{other}{}
\def\fn#1{\operatorname{#1}} % function work like \sin
\def\bm#1{\mathbbm{#1}}
\def\c#1{\mathcal{#1}}
\title{A Note on Plans's Paper of Noether's Problem}
\author{Ming-chang Kang \\[3mm]
Department of Mathematics \\
National Taiwan University \\
Taipei, Taiwan \\
E-mail: kang@math.ntu.edu.tw}
\date{}
\begin{document}

\maketitle

\footnote{\textit{\!\!\! $2010$ Mathematics Subject
Classification}. 12F10, 13A50, 14E08.}
\footnote{\textit{\!\!\! Keywords and phrases}. Noether's problem, rationality problem, unramified primes.}

\begin{abstract}
{\noindent\bf Abstract.} Let $p$ be a prime number and $\zeta_p$ be a primitive $p$-th root of unity in $\bm{C}$.
Let $k$ be a field and $k(x_0,\ldots,x_{p-1})$ be the rational function field of $p$ variables over $k$.
Suppose that $G=\langle\sigma\rangle \simeq C_p$ acts on $k(x_0,\ldots,x_{p-1})$ by $k$-automorphisms defined as $\sigma:x_0\mapsto x_1\mapsto\cdots\mapsto x_{p-1}\mapsto x_0$.
Denote by $P$ the set of all prime numbers and define $P_0=\{p\in P:\bm{Q}(\zeta_{p-1})$ is of class number one$\}$.
Theorem. If $k$ is an algebraic number field and $p\in P\backslash (P_0\cup P_k)$,
then $k(x_0,\ldots,x_{p-1})^G$ is not stably rational over $k$ where $P_k=\{p\in P: p$ is ramified in $k\}$.
\end{abstract}

%\newpage
%------------------------------------S1
\section{Introduction}

Let $p$ be a prime number, $K$ be a field, $K(x_0,\ldots,x_{p-1})$ be the rational function field of $p$ variables over $K$.
Let $G=\langle\sigma\rangle\simeq C_p$ be the cyclic group of order $p$.
Suppose that $G$ acts on $K(x_0,\ldots,x_{p-1})$ by $K$-automorphisms defined as
\[
\sigma: x_0\mapsto x_1\mapsto \cdots\mapsto x_{p-1}\mapsto x_0.
\]
Define the fixed field $K(x_0,\ldots,x_{p-1})^G=\{f\in K(x_0,\ldots,x_{p-1}):\sigma\cdot f=f\}$.
The Noether's problem studied in \cite{Pl} is to investigate that, for which prime number $p$,
the fixed field $\bm{Q}(x_0,\ldots,x_{p-1})^G$ is rational (= purely transcendental) over $\bm{Q}$.

Denote by $\zeta_n$ a primitive $n$-th root of unity in $\bm{C}$ where $n$ is any positive integer.
Let $P$ be the set of all prime numbers.
Define $P_0=\{p\in P:\bm{Q}(\zeta_{p-1})$ is of class number one$\}$.
It is known that $P_0=\{p\in P:p\le 43\}\cup \{61,67,71\}$ by \cite{MM}.
Lenstra shows that the set of prime numbers $p$ such that $\bm{Q}(x_0,\ldots,x_{p-1})^G$ is rational is of Dirichlet density zero \cite[Corollary 7.6]{Le}.
The main result of Plans's paper \cite{Pl} is the following.

%--------------------------t1.1
\begin{theorem}[{Plans \cite[Theorem 1.1]{Pl}}] \label{t1.1}
Let $p$ be a prime number.
Then $\bm{Q}(x_0,\ldots,x_{p-1})^G$ is rational over $\bm{Q}$ if and only if $p\in P_0$.
\end{theorem}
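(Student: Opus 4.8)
The plan is to compute the fixed field explicitly enough to convert the question into the rationality of an algebraic torus, and then to locate the arithmetic obstruction. First I would split off the trivial summand: with $y_0=x_0+x_1+\cdots+x_{p-1}$, the permutation module $V=\bigoplus_i\bm{Q}x_i$ decomposes as $V=\bm{Q}y_0\oplus V_1$, where $V_1$ is the faithful $(p-1)$-dimensional constituent and $y_0$ is $G$-fixed. Since $y_0$ is transcendental and fixed, one checks directly that $\bm{Q}(x_0,\ldots,x_{p-1})^G=\bm{Q}(V_1)^G(y_0)$, so it suffices to analyze $\bm{Q}(V_1)^G$ and restore one rational variable at the end.

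Next I would diagonalize over the splitting field $\bm{Q}(\zeta_p)$. Passing to the Fourier eigencoordinates $y_j=\sum_i\zeta_p^{-ij}x_i$ $(1\le j\le p-1)$ gives $\sigma(y_j)=\zeta_p^{\,j}y_j$, so over $\bm{Q}(\zeta_p)$ the action is diagonal, and by Fischer's theorem the invariant field is generated by the $G$-invariant Laurent monomials, i.e. $\bm{Q}(\zeta_p)(V_1)^G=\bm{Q}(\zeta_p)(M)$ for the lattice $M=\{(a_j)\in\bm{Z}^{p-1}:\sum_j j a_j\equiv 0\ (\mathrm{mod}\ p)\}$; as $M$ is $\bm{Z}$-free this is purely transcendental over $\bm{Q}(\zeta_p)$. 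The Galois group $\Gamma=\fn{Gal}(\bm{Q}(\zeta_p)/\bm{Q})\cong(\bm{Z}/p)^\times\cong C_{p-1}$ acts compatibly by $\gamma_a(y_j)=y_{aj}$, hence by the regular representation on the basis $\{e_j\}$ of $\bm{Z}^{p-1}$, stabilizing $M$, which sits in an exact sequence of $\Gamma$-modules $0\to M\to\bm{Z}[\Gamma]\to\bm{F}_p\to 0$ (with $\Gamma=\bm{F}_p^\times$ acting on $\bm{F}_p$ by multiplication). By Galois descent $\bm{Q}(V_1)^G=\bm{Q}(\zeta_p)(M)^\Gamma$, which is exactly the function field of the algebraic $\bm{Q}$-torus $T$ with character lattice $M$, split by $\bm{Q}(\zeta_p)$.

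The question now falls under the theory of multiplicative invariants and rationality of tori (Voskresenskii, Endo--Miyata, Lenstra): $\bm{Q}(T)$ is stably rational over $\bm{Q}$ if and only if the flabby class $[M]^{\mathrm{fl}}$ vanishes (i.e. $M$ is quasi-permutation), and for a cyclic group the vanishing moreover yields genuine rationality. Both directions of the theorem will therefore follow once I compute $[M]^{\mathrm{fl}}$ for this specific $C_{p-1}$-lattice. I would do this using Endo--Miyata's classification of lattices over a cyclic group, fed by the sequence above: because $\bm{F}_p$ is a faithful simple $\Gamma$-module, it is supported on the top cyclotomic constituent $\bm{Q}(\zeta_{p-1})$ of $\bm{Q}[\Gamma]$, so the obstruction should live in $\fn{Cl}(\bm{Z}[\zeta_{p-1}])$ and be represented by the class of a prime above $p$. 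The aim is to show that $[M]^{\mathrm{fl}}$ vanishes exactly when $\bm{Z}[\zeta_{p-1}]$ is a principal ideal domain, i.e. when $\bm{Q}(\zeta_{p-1})$ has class number one, i.e. $p\in P_0$; restoring the rational variable $y_0$ then gives rationality for $p\in P_0$ and (stable) non-rationality for $p\notin P_0$.

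The main obstacle I anticipate is the step in which the flabby class is pinned down to $\fn{Cl}(\bm{Z}[\zeta_{p-1}])$ alone: a priori the flabby class of a $C_{p-1}$-lattice can receive contributions from $\fn{Cl}(\bm{Z}[\zeta_d])$ for every divisor $d\mid p-1$, and I must verify that for this particular $M$ only the $d=p-1$ term survives, and that it reproduces the \emph{full} class group rather than a single, possibly principal, ideal class. This requires a careful integral analysis of the sequence $0\to M\to\bm{Z}[\Gamma]\to\bm{F}_p\to 0$ --- essentially identifying $M$ with a twist of $\bm{Z}[\zeta_p]$ regarded as a $\bm{Z}[\Gamma]$-lattice and reading off its Steinitz-type invariant --- together with the standard but delicate passage from triviality of the flabby class to actual (not merely stable) rationality in the cyclic case.
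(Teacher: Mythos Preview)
The paper does not prove Theorem~\ref{t1.1}: it is quoted as Plans's result \cite{Pl} and used as a black-box input in the proof of Theorem~\ref{t1.2}. So there is no proof in this paper to compare your proposal against.

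That said, your reduction coincides with Step~1 of the paper's proof of Theorem~\ref{t1.2}: split off the fixed variable $y_0$, pass to the eigenbasis over $K(\zeta_p)$, descend along $\pi=\fn{Gal}(K(\zeta_p)/K)\simeq C_{p-1}$, and write the fixed field as $K(\zeta_p)(M)^\pi(y_0)$ for an explicit $\pi$-lattice $M$; then Theorem~\ref{t2.5} turns (stable) rationality into the question $[M]^{fl}=0$. Where you diverge is that you propose to \emph{compute} $[M]^{fl}$ and relate it to $\fn{Cl}(\bm{Z}[\zeta_{p-1}])$. The present paper never does this; in Step~2 of the proof of Theorem~\ref{t1.2} it runs the implication in the opposite direction, citing Theorem~\ref{t1.1} for the non-rationality of $\bm{Q}(x_0,\ldots,x_{p-1})^G$ when $p\notin P_0$ and then deducing $[M]^{fl}\neq 0$ from Theorem~\ref{t2.5}.

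Your outline for proving Theorem~\ref{t1.1} itself has the right shape, and you have already named the real gap: one must show that for this particular $C_{p-1}$-lattice the flabby class vanishes precisely when $\fn{Cl}(\bm{Z}[\zeta_{p-1}])=0$, not merely when a single prime above $p$ happens to be principal. That is exactly the new content of \cite{Pl}, and it is not a routine consequence of the Endo--Miyata machinery; your sketch does not yet supply this argument. One small caution: the step from $[M]^{fl}=0$ to genuine (not just stable) rationality is not a general feature of cyclic splitting groups; here it comes from \cite[Proposition~5.6]{Le}, which is specific to the Noether problem for abelian $G$.
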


The purpose of this note is to study the situation when the base field is an algebraic number field.

Let $k$ be an algebraic number field and $A$ be the ring of algebraic integers in $k$,
A rational prime $p$ is unramified in $k$ if the ideal $pA$ is a finite product $\c{P}_1\cdots\c{P}_d$ where $\c{P}_1,\ldots,\c{P}_d$ are distinct prime ideals in $A$.
A prime number $p$ is called ramified in $k$ if it is not unramified.
Define $P_k=\{p\in P:p$ is ramified in $k\}$.
If $d_k$ denotes the discriminant of $k$,
then $P_k=\{p\in P:p\mid d_k\}$; thus $P_k$ is a finite set.

\medskip
The main result of this note is the following.

%------------------------t1.2
\begin{theorem} \label{t1.2}
Let $k$ be an algebraic number field.
If $p$ is a prime number and $p\in P\backslash (P_0\cup P_k)$, then $k(x_0,\ldots,x_{p-1})^G$ is not rational (resp.\ not stably rational) over $k$.
\end{theorem}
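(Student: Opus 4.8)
The plan is to reduce the problem, via the classical diagonalization of the permutation representation, to the stable rationality of an algebraic $k$-torus, and then to observe that the hypothesis $p\notin P_k$ pins down that torus to the same ``type'' that occurs over $\mathbf{Q}$, so that the $k=\mathbf{Q}$ case (Theorem~\ref{t1.1}) can be fed back in.

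\textbf{Step 1 (reduction to a torus).} Put $L=k(\zeta_p)$ and, over $L$, set $y_i=\sum_{j=0}^{p-1}\zeta_p^{-ij}x_j$, so that $L(x_0,\ldots,x_{p-1})=L(y_0,y_1,\ldots,y_{p-1})$ and $\sigma(y_i)=\zeta_p^{\,i}y_i$. The $\sigma$-invariant Laurent monomials $\prod_{i=1}^{p-1}y_i^{a_i}$ form the lattice $M=\{(a_1,\ldots,a_{p-1})\in\mathbf{Z}^{p-1}:\sum_{i=1}^{p-1}ia_i\equiv 0\ (\mathrm{mod}\ p)\}$, whence $L(x_0,\ldots,x_{p-1})^G=L(M)(y_0)$, with $L(M)$ the function field of the $L$-split torus having character lattice $M$. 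The group $\Delta=\mathrm{Gal}(L/k)\subseteq(\mathbf{Z}/p)^{\times}$ acts semilinearly, fixes $y_0$, and permutes the coordinates of $M$ by $e_i\mapsto e_{ai}$; taking $\Delta$-invariants gives $k(x_0,\ldots,x_{p-1})^G\cong k(T')(y_0)$, where $T'$ is the $k$-torus split by $L$ with $\widehat{T'}=M$ as $\mathbf{Z}[\Delta]$-lattice. Hence $k(x_0,\ldots,x_{p-1})^G$ is stably rational over $k$ iff $T'$ is stably $k$-rational, and it is rational over $k$ only if $T'$ is stably $k$-rational; so it suffices to show $T'$ is not stably $k$-rational.

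\textbf{Step 2 (the role of $p\notin P_k$).} A prime $\mathcal{P}$ of $L$ above $p$ satisfies $e(\mathcal{P}/p)\ge p-1$, since $p$ is totally ramified in $\mathbf{Q}(\zeta_p)\subseteq L$; on the other hand $e(\mathcal{P}/p)=e(\mathcal{P}/\mathcal{P}\cap k)\le[L:k]\le p-1$ because $p$ is unramified in $k$. Therefore $[L:k]=p-1$ and $\Delta=(\mathbf{Z}/p)^{\times}$, so the pair $(\Delta,M)$ — hence the type of $T'$ — is exactly the one arising for $k=\mathbf{Q}$.

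\textbf{Step 3 (field-independence and conclusion).} By the theory of algebraic tori (Voskresenski\u\i, Endo--Miyata, Colliot-Th\'el\`ene--Sansuc), stable $k$-rationality of $T'$ is equivalent to the $\mathbf{Z}[\Delta]$-lattice $M$ being stably quasi-permutation, a condition that does not involve $k$. Together with Step~2 this shows $T'$ is stably $k$-rational iff the corresponding $\mathbf{Q}$-torus is stably $\mathbf{Q}$-rational, i.e.\ iff $\mathbf{Q}(x_0,\ldots,x_{p-1})^G$ is stably rational over $\mathbf{Q}$. For $p\notin P_0$, Theorem~\ref{t1.1} says this field is not rational over $\mathbf{Q}$; since for these lattices ``rational'', ``stably rational'' and ``retract rational'' all coincide (this is built into Lenstra's lattice-theoretic analysis, \cite{Le}), it is in fact not stably rational over $\mathbf{Q}$. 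Hence $T'$ is not stably $k$-rational, and $k(x_0,\ldots,x_{p-1})^G$ is neither stably rational nor rational over $k$. The main obstacle is Step~3: one must make precise that stable rationality of $T'$ depends only on the lattice $M$ (so the answer is insensitive to $k$), and extract from Plans's/Lenstra's work that for $p\notin P_0$ the obstruction is genuinely stable; Step~1 additionally requires the routine check that $y_0$ is a free variable over $k(T')$ and that the $\Delta$-descent commutes with adjoining it.
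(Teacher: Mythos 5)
Your proposal is correct and takes essentially the same route as the paper: diagonalize via the $y_i$ to write $k(x_0,\ldots,x_{p-1})^G=k(\zeta_p)(M)^{\pi}(y_0)$, invoke the fact that stable rationality is governed by the flabby class of the $\pi$-lattice $M$ (a condition independent of the base field), use $p\notin P_k$ to force $[k(\zeta_p):k]=p-1$ so that $(\pi,M)$ is the same pair as over $\mathbf{Q}$, and then feed in Plans's theorem together with Lenstra's ``rational $\Leftrightarrow$ stably rational''; your only deviation is proving $[k(\zeta_p):k]=p-1$ by multiplicativity of ramification indices rather than by Eisenstein's criterion at an unramified prime as in Lemma \ref{l2.6}, and both arguments are fine. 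One small correction: your parenthetical claim that ``rational, stably rational and retract rational all coincide'' is too strong --- these fixed fields are always retract rational because $\pi$ is cyclic (Endo--Miyata), so retract rationality does not coincide with rationality; what you actually need, and what \cite[Proposition 5.6]{Le} provides, is only the equivalence of rational and stably rational.
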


We remark that, if $p\in P_0$, then $k(x_0,\ldots,x_{p-1})^G$ is rational because $k(x_0,\ldots,x_{p-1})^G$ $=k(y_0,\ldots,y_{p-1})$ whenever $\bm{Q}(x_0,\allowbreak \ldots, x_{p-1})^G=\bm{Q}(y_0,\allowbreak \ldots, y_{p-1})$.
Also note that, $k(x_0,$ $\ldots,x_{p-1})^G$ is rational if and only if $k(x_0,\ldots,x_{p-1})^G$ is stably rational by \cite[Proposition 5.6]{Le}.

Theorem \ref{t1.2} can be generalized furthermore.

\begin{theorem} \label{t1.3}
Let $K$ be a field such that $\fn{char}K=0$ and $K$ is finitely generated over $\bm{Q}$. Then there is a finite subset $P'$ of $P$ satisfying the property that, for all prime numbers $p\in P\backslash (P_0 \cup P')$, $K(x_0,\ldots,x_{p-1})^G$ is not rational (resp.\ not stably rational) over $K$.
\end{theorem}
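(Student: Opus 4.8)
The plan is to reduce the general finitely generated field $K$ of characteristic zero to the algebraic number field case treated in Theorem \ref{t1.2}, via a specialization argument. Write $K$ as the fraction field of a finitely generated $\bm{Q}$-algebra $R=\bm{Q}[t_1,\ldots,t_m]/I$; geometrically $K$ is the function field of an integral $\bm{Q}$-variety $V$. First I would choose a number field $k_0$ over which $V$ (together with all relevant data) is defined, and then use the fact that $k_0$-rational points are Zariski dense — more precisely, that there are $k_0$-points, or points over number fields of bounded ramification, lying in any prescribed nonempty open subset. The key observation is that (stable) rationality of $K(x_0,\ldots,x_{p-1})^G$ over $K$ would descend, after a suitable localization, to (stable) rationality of $k'(x_0,\ldots,x_{p-1})^G$ over $k'$ for a number field $k'$ obtained by specializing the parameters; by Theorem \ref{t1.2} this fails as soon as $p\notin P_0\cup P_{k'}$.

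The second, more delicate step is to control the finite exceptional set $P'$ so that it does not depend on the specialization point. The naive approach produces, for each specialization to a number field $k'$, the bad set $P_0\cup P_{k'}$, but $P_{k'}$ (the primes dividing the discriminant of $k'$) can in principle grow without bound as the point varies. To handle this I would spread out $V$ to a scheme $\c{V}$ of finite type over $\bm{Z}[1/N]$ for a suitable integer $N$ (inverting the primes of bad reduction, the primes ramified in $k_0$, and the primes appearing in the denominators of the defining equations and of the group action), and observe that a section of $\c{V}$ over $\bm{Z}[1/N]$ — equivalently an $S$-integral point where $S$ consists of the primes dividing $N$ — yields a number field $k'$ in which every prime $p\nmid N$ is unramified, i.e.\ $P_{k'}\subseteq S$. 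Setting $P'=S$ then works uniformly. The existence of such an $S$-integral (or at least a suitably unramified-away-from-$S$) rational point on a positive-dimensional variety over $\bm{Z}[1/N]$ is the technical heart; one can invoke Hilbert irreducibility together with a Bertini-type argument, cutting $V$ down by generic hyperplane sections to a curve and then to finitely many points, arranging along the way that the residue fields stay unramified outside $S$.

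The last step is the descent of (stable) non-rationality itself: if $L=K(x_0,\ldots,x_{p-1})^G$ were stably rational over $K$, then writing out the finitely many rational functions witnessing $L(z_1,\ldots,z_r)\cong K(w_1,\ldots,w_s)$ and clearing denominators, all the relations hold over a localization $R_f$ of $R$; specializing the parameters $t_i$ to values in a number field $k'$ (avoiding the vanishing of $f$ and of the other finitely many denominators that occur) yields the corresponding isomorphism over $k'$, hence $k'(x_0,\ldots,x_{p-1})^G$ is stably rational over $k'$. For $p\notin P_0\cup P'$ with $p$ unramified in $k'$ this contradicts Theorem \ref{t1.2}. I expect the main obstacle to be precisely the uniform control of ramification in the specialized number fields — ensuring $P_{k'}\subseteq P'$ independently of the point — which is why the spreading-out over $\bm{Z}[1/N]$ and the careful choice of $N$ (and of the open set in which the specialization point is taken) are the crux of the argument; the algebraic descent of the rationality data is comparatively routine.
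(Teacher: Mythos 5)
Your overall strategy (specialize the parameters to a number field and quote Theorem \ref{t1.2}) is genuinely different from the paper's, and as written it has a real gap exactly at the step you yourself flag as the technical heart. What you need is: for every $p$ outside a fixed finite set, a closed point $u$ of the ($p$-dependent!) dense open $U\subseteq V$ where the rationality data survives, such that the residue field $k'=\kappa(u)$ satisfies $p\notin P_{k'}$, i.e.\ is unramified at $p$ \emph{at every place}. Neither of your proposed tools delivers this. A section of $\c{V}$ over $\bm{Z}[1/N]$ is a $\bm{Q}$-point, and $V$ need have no rational (or $k_0$-rational) points at all; being an $\c{O}_{k'}[1/N]$-valued point does not by itself force $k'$ to be unramified outside $S$; and Hilbert irreducibility controls irreducibility of specialized fibers but gives no bound whatsoever on the discriminants of the residue fields (these are typically unbounded as the specialization point varies), while Bertini says nothing about ramification. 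For a general finitely generated $K$ --- say the function field of a curve of genus at least $2$ over $\bm{Q}$ --- the existence of infinitely many closed points whose residue fields are unramified outside one fixed finite set $S$ is a notoriously difficult question (points of bounded degree cannot suffice by Hermite--Minkowski plus Faltings; the general question is tied to whether the maximal extension of $\bm{Q}$ unramified outside $S$ is a large field), so it cannot be waved through. There is also the quantifier problem you half-acknowledge: since $U$ depends on $p$, one fixed integral point cannot serve all $p$; the point must be re-chosen inside each $U$.

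The specialization route can in fact be salvaged, but only by weakening what you ask of $k'$: the proof of Theorem \ref{t1.2} uses $p\notin P_{k'}$ only to guarantee $[k'(\zeta_p):k']=p-1$, and for this it suffices that $k'$ have a \emph{single} unramified place above $p$ (equivalently, embed into a finite unramified extension of $\bm{Q}_p$), since any nontrivial subfield of $\bm{Q}(\zeta_p)$ is totally ramified at $p$. Such closed points do exist in every dense open $U\subseteq V$ for all $p$ prime to a fixed integer depending only on a chosen smooth model $\c{V}$ of $V$ over $\bm{Z}[1/N_0]$: the closure of $V\setminus U$ is flat over the base, so it cannot contain the whole fibre $\c{V}_{\bm{F}_p}$, Lang--Weil plus Hensel then produce an unramified local point of $U$, and an approximation/Krasner argument turns it into a closed point with residue field inside $\bm{Q}_p^{\mathrm{ur}}$. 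That, however, is a different argument from the one you wrote, and it is far more roundabout than the paper's: the paper does no specialization at all. It observes that Step 1 of the proof of Theorem \ref{t1.2} only needs $[K(\zeta_p):K]=p-1$, under which the relevant $\pi$-lattice $M$ is literally the same as over $\bm{Q}$, so $[M]^{fl}\neq 0$ by Plans's theorem; the whole content of Theorem \ref{t1.3} is then Lemma \ref{l2.7}, proved by showing $\Phi_p$ is Eisenstein over a Dedekind ring $C\subseteq K$ (the integral closure of $S^{-1}A[t]$) in which only finitely many rational primes ramify. You should either adopt that line or replace your ``unramified outside $S$'' requirement by the one-place condition above.
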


The proofs of Theorem \ref{t1.2} and Theorem \ref{t1.3} will be given in Section 2.

%------------------------------------S2
\section{Proof of Theorem \ref{t1.2}}

Let $\pi$ be a finite group.
We recall the definition of $\pi$-lattices.

%------------------d2.1
\begin{defn} \label{d2.1}
Let $\pi$ be a finite group.
A finitely generated $\bm{Z}[\pi]$-module $M$ is called a $\pi$-lattice if $M$ is a free abelian group when it is regarded as an abelian group.

If $M$ is a $\pi$-lattice and $L$ is a field with $\pi$-action,
we will associate a rational function field over $L$ with $\pi$-action as follows.
Suppose that $M=\bigoplus_{1\le i\le m} \bm{Z}\cdot u_i$.
Define $L(M)=L(x_1,\ldots,x_m)$, a rational function field of $m$ variables over $L$.
For any $\sigma\in \pi$, if $\sigma\cdot u_i=\sum_{1\le j\le m} a_{ij}u_j$ in $M$ (where $a_{ij}\in\bm{Z}$),
we define $\sigma\cdot x_i=\prod_{1\le j\le m} x_j^{a_{ij}}$ in $L(M)$ and, for any $\alpha\in L$, define $\sigma \cdot \alpha$ by the prescribed $\pi$-action on $L$. \end{defn}

%-------------------d2.2
\begin{defn} \label{d2.2}
Let $\pi$ be a finite group and $M$ be a $\pi$-lattice.
$M$ is called a permutation lattice if $M$ has a $\bm{Z}$-basis permuted by $\pi$.
A $\pi$-lattice $M$ is called an invertible lattice if it is a direct summand of some permutation lattice.
A $\pi$-lattice $M$ is called a flabby lattice if $H^{-1}(\pi',M)=0$ for all subgroup $\pi'$ of $\pi$;
it is called a coflabby lattice if $H^1(\pi',M)=0$ for all subgroups $\pi'$ of $\pi$.
For the basic properties of $\pi$-lattices, see \cite{CTS,Sw}.
\end{defn}

%-------------------d2.3
\begin{defn} \label{d2.3}
Let $\pi$ be a finite group.
Two $\pi$-lattices $M_1$ and $M_2$ are called similar, denoted by $M_1\sim M_2$,
if $M_1\oplus Q_1\simeq M_2\oplus Q_2$ for some permutation lattices $Q_1$ and $Q_2$.
The flabby class monoid $F_\pi$ consists of all the similarity classes of flabby $\pi$-lattices under the addition described below.
Explicitly, if $M$ is a flabby $\pi$-lattice,
then $[M]\in F_\pi$ denotes the similarity class containing $M$;
the addition in $F_\pi$ is defined as: $[M_1]+[M_2]=[M_1\oplus M_2]$.
Note that $[M]=0$ in $F_\pi$,
i.e.\ $[M]$ is the zero element in $F_\pi$, if and only if $M\oplus Q$ is isomorphic to a permutation lattice where $Q$ is some permutation lattice.
See \cite{Sw} for details.
\end{defn}

%-----------------d2.4
\begin{defn} \label{d2.4}
Let $\pi$ be a finite group, $M$ be a $\pi$-lattice.
The $M$ have a flabby resolution,
i.e.\ there is an exact sequence of $\pi$-lattices:
$0\to M\to Q\to E\to 0$ where $Q$ is a permutation lattice and $E$ is a flabby lattice \cite[Lemma 1.1; CTS; Sw]{EM2}.

Although the above flabby resolution is not unique, the class $[E]\in F_\pi$ is uniquely determined by $M$.
Thus we define the flabby class of $M$, denoted as $[M]^{fl}$, by $[M]^{fl}=[E]\in F_\pi$ (see \cite{Sw}).
Sometimes we say that $[M]^{fl}$ is permutation or invertible if the class $[E]$ contains a permutation lattice or an invertible lattice.
\end{defn}

%---------------------t2.5
\begin{theorem} \label{t2.5}
Let $L/K$ be a finite Galois extension with $\pi=\fn{Gal}(L/K)$.
Let $M$ be a $\pi$-lattice.
\begin{enumerate}
\item[$(1)$] {\rm (\cite[Theorem 1.6; Vo; Le, Theorem 1.7; CTS]{EM1})}
The fixed field $L(M)^\pi$ is stably rational over $K$ if and only if $[M]^{fl}=0$ in $F_\pi$.
\item[$(2)$] {\rm (\cite[Theorem 3.14]{Sa})}
Assume that $K$ is an infinite field.
Then the fixed field $L(M)^\pi$ is retract rational over $K$ if and only if $[M]^{fl}$ is invertible.
\end{enumerate}
\end{theorem}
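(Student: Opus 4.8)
The plan is to translate the whole statement into the geometry of the algebraic $K$-torus $T$ split by $L$ whose character lattice is the $\pi$-module $M$: the multiplicative $\pi$-action of Definition~\ref{d2.1} makes $L(M)=L(x_1,\dots,x_m)$ the function field of the split torus $\operatorname{Spec}L[M]$, and taking $\pi$-invariants gives $L(M)^\pi=K(T)$ where $\hat T=M$ as a Galois module. The functor $M\mapsto T_M$ is a contravariant equivalence, so a surjection of $\pi$-lattices $N\twoheadrightarrow P$ with kernel $N'$ dualizes to $1\to T_P\to T_N\to T_{N'}\to 1$, and the projection $T_N\to T_{N'}$ is a $T_P$-torsor. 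When $P$ is permutation, $T_P$ is a product of Weil restrictions $R_{L_i/K}\mathbb{G}_m$, so $H^1(F,T_P)=0$ for every field $F\supseteq K$ by Shapiro's lemma and Hilbert~90; hence the generic fibre is a trivial torsor and $L(N)^\pi$ is purely transcendental over $L(N')^\pi$. This single reduction (the ``no-name'' step) shows $L(P)^\pi$ is rational over $K$ for permutation $P$ and that $L(M\oplus P)^\pi$ is rational over $L(M)^\pi$, so similarity of lattices (Definition~\ref{d2.3}) changes $L(M)^\pi$ only up to stable isomorphism; thus $[M]^{fl}$ is the correct invariant to track.

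For the ``if'' direction of~(1) I would start from a flabby resolution $0\to M\to Q\to E\to 0$ (Definition~\ref{d2.4}), so $[M]^{fl}=[E]$. If $[E]=0$ then $E\oplus Q_1\simeq Q_2$ with $Q_1,Q_2$ permutation, and adding $Q_1$ yields $0\to M\to Q\oplus Q_1\to Q_2\to 0$, an extension with permutation middle term and permutation quotient. The reduction above then makes $L(Q\oplus Q_1)^\pi$ simultaneously rational over $L(M)^\pi$ (permutation quotient) and rational over $K$ (permutation middle term), forcing $L(M)^\pi$ to be stably rational over $K$. The ``only if'' direction is the substantive one: one must prove that $M\mapsto[M]^{fl}$ depends only on the stable $K$-isomorphism class of $L(M)^\pi$. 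Granting this, stable rationality means $L(M)^\pi$ is stably isomorphic to $K=L(0)^\pi$, whose flabby class is $0$, so $[M]^{fl}=0$. The invariance itself is proved by comparing two flasque resolutions of stably isomorphic tori and using that flabby lattices have vanishing $H^{-1}(\pi',-)$, which is exactly what lets one cancel the permutation ``error terms'' and equate the two classes in $F_\pi$.

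For~(2) I would invoke Saltman's lifting description of retract rationality over the infinite field $K$: $K(T)$ is retract rational iff $T$ is a retract of a $K$-rational variety, equivalently iff there is a versal $T$-torsor through which every $T$-torsor over a field containing $K$ factors up to specialization. The plan is to encode this torsor-splitting property cohomologically through the flabby lattice $E$ of the resolution: the relevant obstruction splits after localization precisely when $E$ is a direct summand of a permutation lattice, i.e.\ when $[E]=[M]^{fl}$ is invertible (Definition~\ref{d2.2}). Concretely, invertibility of $E$ produces a section over a localization of the base, giving the retraction, while conversely a retraction, specialized at the identity, yields enough splittings of $0\to M\to Q\to E\to 0$ to force $E$ to be (stably) invertible.

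The hard part, in both cases, is the passage \emph{back} from a field-theoretic hypothesis to the lattice side — proving that $[M]^{fl}$ is a genuine stable-birational (resp.\ retract-birational) invariant rather than merely an isomorphism invariant of $M$. This is where the full Galois-cohomological dictionary for tori is unavoidable: comparison of flasque resolutions and Hilbert~90 for quasi-split tori for~(1), and Saltman's specialization/lifting machinery for torsors for~(2). These are exactly the technical cores of the Endo--Miyata--Voskresenskii--Colliot-Th\'el\`ene--Sansuc theory and of Saltman's theorem, and I would treat the ``easy'' directions by the explicit resolution constructions above while citing that machinery for the invariance statements.
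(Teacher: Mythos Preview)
Your proposal is a reasonable sketch of the classical arguments behind this theorem, but there is nothing to compare it against: in the paper Theorem~\ref{t2.5} is stated \emph{without proof} and attributed wholesale to the cited references (Endo--Miyata, Voskresenskii, Lenstra, Colliot-Th\'el\`ene--Sansuc for part~(1); Saltman for part~(2)). The paper treats it as a black box and only \emph{applies} it in Step~1 and Step~2 of the proof of Theorem~\ref{t1.2}.

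So the ``paper's own proof'' is simply a citation. Your outline is not wrong as a roadmap --- the no-name/Hilbert~90 reduction for permutation quotients, the manipulation of flabby resolutions for the easy direction of~(1), and Saltman's lifting/specialization criterion for~(2) are indeed the standard ingredients --- but you should be aware that reproducing it here goes well beyond what the paper does, and the delicate invariance statements you flag as ``the hard part'' are precisely what the cited papers establish. If the intent is to match the paper, the correct move is to state the theorem and cite the sources, not to re-prove it.
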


\begin{remark}
Note that, ``rational" $\Rightarrow$ ``stably rational" $\Rightarrow$ ``retract rational"$\Rightarrow$ ``unirational".
For the definition of retract rationality, see \cite{Sa}.
The fixed field $L(M)^\pi$ is the function field of the algebraic torus $T$ defined over $K$,
split by $L$ and with character module $M$ (see \cite{Vo,Sw}).
\end{remark}

%----------------------l2.6
\begin{lemma} \label{l2.6}
Let $k$ be an algebraic number field, $P_k$ be the finite subset of $P$ defined in Section 1.
If $p\in P\backslash P_k$, then $[k(\zeta_p):k]=p-1$ and $\fn{Gal}(k(\zeta_p)/k)\simeq C_{p-1}$.
\end{lemma}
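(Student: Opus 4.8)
The plan is to reduce the lemma to the classical structure of cyclotomic fields together with one ramification argument. Recall that $\bm{Q}(\zeta_p)/\bm{Q}$ is Galois with $\fn{Gal}(\bm{Q}(\zeta_p)/\bm{Q})\simeq(\bm{Z}/p\bm{Z})^\times\simeq C_{p-1}$, that $[\bm{Q}(\zeta_p):\bm{Q}]=p-1$, and that $p$ is \emph{totally} ramified in $\bm{Q}(\zeta_p)$ while every other rational prime is unramified. Since $k(\zeta_p)=k\cdot\bm{Q}(\zeta_p)$ is a compositum inside $\bm{C}$ and $\bm{Q}(\zeta_p)/\bm{Q}$ is Galois, the standard theory of composita gives a canonical isomorphism $\fn{Gal}(k(\zeta_p)/k)\simeq\fn{Gal}(\bm{Q}(\zeta_p)/F)$ via restriction, where $F=k\cap\bm{Q}(\zeta_p)$; in particular $[k(\zeta_p):k]=[\bm{Q}(\zeta_p):F]$. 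So the whole lemma comes down to proving that $F=\bm{Q}$.

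To show $F=\bm{Q}$ I would argue by contradiction. Suppose $F\ne\bm{Q}$ and set $H=\fn{Gal}(\bm{Q}(\zeta_p)/F)$, a proper subgroup of $G=\fn{Gal}(\bm{Q}(\zeta_p)/\bm{Q})$. Because $p$ is totally ramified in $\bm{Q}(\zeta_p)$ there is a unique prime $\c{P}$ of $\bm{Q}(\zeta_p)$ above $p$, with $e(\c{P}/p)=p-1=|G|$. Put $\mathfrak{q}=\c{P}\cap F$. From the multiplicativity $e(\c{P}/p)=e(\c{P}/\mathfrak{q})\,e(\mathfrak{q}/p)$, together with $e(\c{P}/\mathfrak{q})\le[\bm{Q}(\zeta_p):F]=|H|$ and $e(\mathfrak{q}/p)\le[F:\bm{Q}]=|G|/|H|$, both inequalities are forced to be equalities, so $e(\mathfrak{q}/p)=[F:\bm{Q}]$; i.e.\ $p$ is totally ramified in $F/\bm{Q}$, and since $F\ne\bm{Q}$ this means $p$ ramifies in $F$. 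On the other hand $F\subseteq k$, and by multiplicativity of ramification indices in the tower $\bm{Q}\subseteq F\subseteq k$ the hypothesis $p\in P\backslash P_k$ (that $p$ is unramified in $k$) forces $p$ to be unramified in $F$. This contradiction gives $F=\bm{Q}$, whence $[k(\zeta_p):k]=[\bm{Q}(\zeta_p):\bm{Q}]=p-1$ and $\fn{Gal}(k(\zeta_p)/k)\simeq\fn{Gal}(\bm{Q}(\zeta_p)/\bm{Q})\simeq C_{p-1}$.

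The only point that needs care is this ramification bookkeeping, and it is delicate only because the relevant facts must be used in the right direction: one needs that $\bm{Q}(\zeta_p)/\bm{Q}$ is \emph{totally} ramified at $p$, not merely ramified, so that ramification is not lost on passing to the subfield $F$, and one needs that unramifiedness descends to subextensions (applied here to $\bm{Q}\subseteq F\subseteq k$), not the reverse. Alternatively, the inertia-index computation could be replaced by Minkowski's bound — every number field $\ne\bm{Q}$ has $|d_F|>1$, so a subfield $F\ne\bm{Q}$ of $\bm{Q}(\zeta_p)$ must be ramified somewhere, necessarily at the only ramified prime $p$ — but the elementary total-ramification argument above is self-contained.
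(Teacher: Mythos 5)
Your proof is correct, but it follows a genuinely different route from the paper. You reduce the lemma to showing $F:=k\cap\bm{Q}(\zeta_p)=\bm{Q}$ via the Galois theory of composita, and then rule out $F\ne\bm{Q}$ by playing off the fact that $p$ is \emph{totally} ramified in $\bm{Q}(\zeta_p)/\bm{Q}$ (so any nontrivial subfield $F$ is ramified at $p$) against the descent of unramifiedness from $k$ to its subfield $F$; your bookkeeping with $e(\c{P}/p)=e(\c{P}/\mathfrak{q})e(\mathfrak{q}/p)$ and the degree bounds is right, as is the alternative appeal to Minkowski's theorem. The paper instead proves irreducibility of $\Phi_p(X)$ over $k$ directly: since $p$ is unramified, $p$ is a uniformizer of the discrete valuation ring $A_{\c{P}}$ for a prime $\c{P}$ of $k$ above $p$, and Eisenstein's criterion applied to $\Phi_p(X+1)$ over $A_{\c{P}}$ gives irreducibility, hence $[k(\zeta_p):k]=p-1$ and the restriction map identifies $\fn{Gal}(k(\zeta_p)/k)$ with $\fn{Gal}(\bm{Q}(\zeta_p)/\bm{Q})\simeq C_{p-1}$. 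Your argument is more conceptual and computation-free, but it leans on global facts about cyclotomic fields (total ramification at $p$, or the discriminant bound); the paper's local Eisenstein argument is more elementary and, more importantly, is the one that transfers verbatim to Lemma \ref{l2.7}, where $K$ is a finitely generated field over $\bm{Q}$ and the base ring is only a Dedekind domain built from $A[t]$ — a setting where your intersection-with-$\bm{Q}(\zeta_p)$ reduction would need to be reworked in terms of ramification of primes of that Dedekind domain rather than of rational primes.
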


\begin{proof}
Let $f(X)=\Phi_p(X)=X^{p-1}+\cdots+X+1\in k[X]$.
We will show that $f(X)$ is irreducible in $k[X]$.

Let $A$ be the ring of algebraic integers in $k$ and $pA=\c{P}_1\cdots \c{P}_d$ where $\c{P}_1,\ldots,\c{P}_d$ are distinct prime ideals (because $p$ is unramified in $k$).

Write $\c{P}=\c{P}_1$ and consider the localization $A_{\c{P}}$.
Note that $A_{\c{P}}$ is a DVR whose maximal ideal is generated by some prime element $v\in\c{P}A_{\c{P}}$.
From $pA=\c{P}_1\cdots \c{P}_d$, we get $p=\varepsilon v$ where $\varepsilon$ is a unit in $A_{\c{P}}$.

Now we begin to prove that $f(X)$ is irreducible in $k[X]$.
Note that $f(X)$ is irreducible in $k[X]$ if and only if so is $f(X+1)$.
It is easy to see that $f(X+1)=X^{p-1}+\sum_{1\le i\le p-1} a_i X^{p-i-1}$ where $a_i\in \bm{Z}$,
$a_{p-1}=p$ and $p\mid a_i$ for $1\le i\le p-1$.
Regard $f(X+1)$ as a polynomial in $A_{\c{P}}[X]$ and apply Eisenstein's irreducibility criterion.
It follows that $f(X+1)$ is irreducible in $A_{\c{P}}[X]$.
In particular, it is irreducible in $k[X]$.
\end{proof}

We may generalize Lemma \ref{l2.6} as follows.

\begin{lemma} \label{l2.7}

Let $K$ be a field such that $\fn{char}K=0$ and $K$ is finitely generated over $\bm{Q}$. Then there is a finite subset $P'$ of $P$ satisfying the property that, for all prime numbers $p\in P\backslash P'$, we have $[K(\zeta_p):K]=p-1$ and $\fn{Gal}(K(\zeta_p)/K)\simeq C_{p-1}$.
\end{lemma}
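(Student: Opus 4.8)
The plan is to reduce the statement for a general finitely generated field $K$ of characteristic zero to the number-field case treated in Lemma \ref{l2.6}, by spreading out $K$ over a suitable finitely generated $\bm{Z}$-algebra and specializing. Write $K$ as a finite extension of a purely transcendental field $\bm{Q}(t_1,\ldots,t_r)$, say $K=\bm{Q}(t_1,\ldots,t_r)[\theta]$ where $\theta$ is a root of a monic irreducible $g(X)\in \bm{Q}(t_1,\ldots,t_r)[X]$. Let $k_0$ be the number field generated over $\bm{Q}$ by all the coefficients of $g$; then $K$ is a finite extension of $k_0(t_1,\ldots,t_r)$. First I would dispose of the transcendental part: for a purely transcendental extension $F(t)$ of any field $F$, the cyclotomic polynomial $\Phi_p$ has the same factorization over $F(t)$ as over $F$ (since $F$ is algebraically closed in $F(t)$, and the roots of $\Phi_p$ are algebraic over the prime field), so $[F(t)(\zeta_p):F(t)]=[F(\zeta_p):F]$; iterating, $[k_0(t_1,\ldots,t_r)(\zeta_p):k_0(t_1,\ldots,t_r)]=[k_0(\zeta_p):k_0]$.

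The remaining, finite-degree part is the heart of the argument. Let $L=K(\zeta_p)$ and note $[L:K]$ divides $p-1$ always; I must rule out proper divisors for all but finitely many $p$. Set $E=k_0(t_1,\ldots,t_r)$, so $K/E$ is finite of some fixed degree $n$. By the purely transcendental step, $[E(\zeta_p):E]=[k_0(\zeta_p):k_0]$, which by Lemma \ref{l2.6} equals $p-1$ for every $p\notin P_{k_0}$ (a finite set). Since $\fn{Gal}(E(\zeta_p)/E)\hookrightarrow (\bm{Z}/p\bm{Z})^\times$ is cyclic of order $p-1$, its subgroup $\fn{Gal}(E(\zeta_p)/E\cap K)$ has index dividing $n=[K:E]$, so $[E(\zeta_p)\cap K : E]\le n$; equivalently the cyclic group $\fn{Gal}(E(\zeta_p)/E)$ has a subgroup of index $\le n$ cutting out $E(\zeta_p)\cap K$, hence $[(E\cap K)(\zeta_p):E\cap K]=[E(\zeta_p):E]/[E(\zeta_p)\cap K:E]\ge (p-1)/n$. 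But $K\subseteq E(\zeta_p)\!\cdot\! K$ and $[K(\zeta_p):K]=[E(\zeta_p)\cdot K : K]=[E(\zeta_p):E(\zeta_p)\cap K]\ge (p-1)/n$. So $[K(\zeta_p):K]$ is a divisor of $p-1$ that is at least $(p-1)/n$; this still allows it to be a proper divisor, so a finite set $P'$ cannot yet be extracted this way alone.

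To close the gap I would argue instead via reduction mod a good prime. Choose a finite set $S$ of rational primes (depending only on $K$) such that $K$ has a model: a smooth affine $\bm{Z}[1/S]$-algebra $R$ with fraction field $K$, chosen so that $R[\zeta_p]$ is finite étale over $R$ for every $p\notin S$ (this is where one pays with a possibly larger finite bad set; only finitely many primes can divide the relevant discriminants and leading coefficients since $R$ is finitely generated). Pick a closed point of $\fn{Spec}R$ with residue field a finite field $\bm{F}_q$, $q$ a power of a prime $\ell\notin S$; then $[K(\zeta_p):K]$ is a multiple of the order of $q$ in $(\bm{Z}/p\bm{Z})^\times$, i.e.\ a multiple of the order of $\fn{Frob}_q$ acting on $\mu_p$. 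The main obstacle — and the place the finite exceptional set $P'$ genuinely comes from — is to guarantee that this Frobenius has order exactly $p-1$ for all large $p$, equivalently that $q$ is a primitive root mod $p$. Since we only need this for a single fixed $q$ (one well-chosen closed point), and since the set of $p$ for which a fixed $q$ is \emph{not} a primitive root is infinite, this naive choice fails; the correct fix is to observe that $[K(\zeta_p):K]=p-1$ holds iff $\Phi_p$ is irreducible over $K$, iff the torsion-free rank-$(p-1)$ lattice has no proper $K$-subextension, and to use that for $p\notin P_{k_0}$ the field $k_0(\zeta_p)$ and $K$ are linearly disjoint over $k_0$ except possibly when $K$ itself contains a proper subfield of $\bm{Q}(\zeta_p)$ — but $K$ contains only finitely many roots of unity and only finitely many abelian number fields, so only finitely many $p$ are excluded. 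Thus I would finish by: letting $P'$ be the (finite) union of $P_{k_0}$ with the set of primes $p$ for which $K$ contains a subfield of $\bm{Q}(\zeta_p)$ strictly larger than $\bm{Q}$; for $p\notin P'$, $K\cap \bm{Q}(\zeta_p)=\bm{Q}$, hence $K$ and $k_0(\zeta_p)$ are linearly disjoint over $k_0$, giving $[K(\zeta_p):K]=[k_0(\zeta_p):k_0]=p-1$ and $\fn{Gal}(K(\zeta_p)/K)\simeq\fn{Gal}(k_0(\zeta_p)/k_0)\simeq C_{p-1}$ by Lemma \ref{l2.6}. The one point requiring care is the finiteness of $\{p : K \supsetneq\text{ some subfield of }\bm{Q}(\zeta_p)\text{ other than }\bm{Q}\}$, which follows because the integral closure of $\bm{Z}$ in $K$ is unramified outside a fixed finite set of primes while $\bm{Q}(\zeta_p)/\bm{Q}$ is totally ramified at $p$.
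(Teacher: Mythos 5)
Your final paragraph does prove the lemma, but by a genuinely different route from the paper. The paper imitates Lemma \ref{l2.6} directly: it localizes $A[t]$ at the set of monic polynomials to obtain a one-dimensional Dedekind domain $B$ (Seidenberg), takes the integral closure $C$ of $B$ in $K$, observes that only finitely many primes of $B$ ramify in $C$, and for $p$ outside the correspondingly enlarged finite set runs the Eisenstein argument on $\Phi_p(X+1)$ at a prime of $C$ above $p$, so that $\Phi_p$ is irreducible over $K$. You instead argue purely Galois-theoretically: since $\bm{Q}(\zeta_p)/\bm{Q}$ is Galois, $\fn{Gal}(K(\zeta_p)/K)\simeq\fn{Gal}(\bm{Q}(\zeta_p)/K\cap\bm{Q}(\zeta_p))$, so it suffices to show $K\cap\bm{Q}(\zeta_p)=\bm{Q}$ for all but finitely many $p$; this holds because any subfield of $\bm{Q}(\zeta_p)$ other than $\bm{Q}$ is ramified at $p$, while every number field inside $K$ lies in the algebraic closure $k$ of $\bm{Q}$ in $K$, which is a number field (this is where finite generation over $\bm{Q}$ is used) and hence ramified at only finitely many primes. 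Your argument is shorter, needs no spreading-out or Eisenstein step, and in fact re-proves Lemma \ref{l2.6} itself (take $K=k$); the paper's argument, by contrast, extends the explicit Eisenstein mechanism of Lemma \ref{l2.6} to a Dedekind model of $K$. Two blemishes worth fixing: your $k_0$ is not well defined (the coefficients of $g$ lie in $\bm{Q}(t_1,\ldots,t_r)$, so they do not generate a number field; what you want is the constant field $k$, or simply $\bm{Q}$), and the detour through linear disjointness over $k_0$ is unnecessary, since $K\cap\bm{Q}(\zeta_p)=\bm{Q}$ already gives linear disjointness of $K$ and $\bm{Q}(\zeta_p)$ over $\bm{Q}$ and hence $[K(\zeta_p):K]=p-1$ with cyclic Galois group. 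Finally, the first two strategies you sketch (degree counting, Frobenius at a closed point) are, as you yourself note, dead ends; only the last paragraph constitutes the proof and should be what you keep.
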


\begin{proof}

Step 1.
As before, we will show that $\Phi_p(X)$ is irreducible in $K[X]$ where $\Phi_p(X)=X^{p-1}+\cdots+X+1\in K[X]$.

Let $k$ be the algebraic closure of $\bm{Q}$ in $K$. Then $k$ is an algebraic number field. Choose a transcendence basis $t_1, \ldots, t_m$ of $K$ over $k$. Thus $k(t_1, \ldots, t_m)$ is rational over $k$ and $K$ is a finite extension of $k(t_1, \ldots, t_m)$.

For simplicity, we consider the case $m=1$ (the general case can be proved similarly). From now on, we consider the field extensions $\bm{Q} \subset k \subset k(t) \subset K$.

\medskip
Let $A$ be the ring of algebraic integers in $k$.

Define $S$ to be the multiplicatively closed subset of the polynomial ring $A[t]$ consisting of all the monic polynomials. Define $B= S^{-1}A[t]$, the localization of $A[t]$ by $S$. From Seidenberg's theory, we find that $B$ is of Krull dimension one (see \cite[page 26, Theorem 39]{Ka}). Thus $B$ is a Dedekind domain.

Define $C$ to be the integral closure of $B$ in $K$. Then $C$ is also a Dedekind domain. Note that there are only finitely many prime ideals $\mathcal{Q}_1, \ldots, \mathcal{Q}_t$ in $B$ such that $\mathcal{Q}_j$ are ramified in $C$ (see the last paragraph of \cite[page 306]{ZS}).

\bigskip
Step 2.
Let $P_k$ be the same subset of $P$ defined in Lemma \ref{l2.6} for the algebraic number field $k$. For any $p \in P \setminus P_k$, $p$ is unramified in $A$. Thus $p$ is also unramified in $B$. Now we enlarge $P_k$ to a finite subset $P'$ of $P$ by adding all the prime numbers $p$ such that $p\bm{Z}=\mathcal{Q}_j \cap \bm{Z}$ where $\mathcal{Q}_j$ belongs to those prime ideals ramified in $C$ (see Step 1).

It is not difficult to see that, if $p \in P \setminus P'$, then $p$ is unramified in $C$. For such prime number $p$, the polynomial $\Phi_p(X)$ is irreducible in $K[X]$ because the proof is the same as in Lemma \ref{l2.6}.
\end{proof}

\begin{proof}[Proof of Theorem \ref{t1.2}] ~ \par

Step 1.
Let $p$ be a prime number, $K$ be a field satisfying that $\fn{char}K=0$ and $[K(\zeta_p):K]=p-1$.
Consider the action of $G=\langle \sigma\rangle\simeq C_p$ on the rational function field $K(x_0,x_1,\ldots,x_{p-1})$ by
\[
\sigma:x_0\mapsto x_1\mapsto \cdots \mapsto x_{p-1}\mapsto x_0.
\]
Define $\pi=\fn{Gal}(K(\zeta_p)/K)=\langle \tau\rangle \simeq C_{p-1}$ where $\tau\cdot \zeta_p=\zeta_p^t$,
$(\bm{Z}/p\bm{Z})^{\times}=\langle \bar{t}\rangle$ with $t^{p-1}=1+ps$ (it is required that $t,s\in\bm{N})$.

Define $y_0,y_1,\ldots,y_{p-1}\in K(\zeta_p)(x_0,\ldots,x_{p-1})$ by
\[
y_i=\sum_{0\le j\le p-1} \zeta_p^{-ij} x_j.
\]
Extend the actions of $\sigma$ and $\tau$ to $K(\zeta_p)(x_0,\ldots,x_{p-1})$ by requiring that $\sigma\cdot \zeta_p=\zeta_p$,
$\tau\cdot x_i=x_i$ for $0\le i\le p-1$.
It follows that
\[
\sigma\cdot y_i=\zeta_p^i y_i,\quad \tau\cdot y_i=y_{it}
\]
for $0\le i\le p-1$ where the indices of $y_i$ are taken modulo $p$.
Thus we have
\begin{align*}
K(x_0,\ldots,x_{p-1})^{\langle \sigma\rangle} &= \{K(\zeta_p)(x_0,\ldots,x_{p-1})^{\langle\tau\rangle}\}^{\langle\sigma\rangle} \\
&=K(\zeta_p)(x_0,\ldots,x_{p-1})^{\langle\sigma,\tau\rangle} \\
&=K(\zeta_p)(y_0,\ldots,y_{p-1})^{\langle\sigma,\tau\rangle} \\
&=\{K(\zeta_p)(y_0,\ldots,y_{p-1})^{\langle\sigma\rangle}\}^{\langle\tau\rangle} \\
&=K(\zeta_p)(z_1,\ldots,z_{p-1})(y_0)^{\langle\tau\rangle}
\end{align*}
where $z_1=y_t/y_1^t$, $z_2=y_{t^2}/y_t^t$, $\ldots$, $z_{p-2}=y_{t^{p-2}}/(y_{t^{p-3}})^t$, $z_{p-1}=y_1^p$ (with the indices of $y_i$ being taken modulo $p$).
Note that
\begin{align*}
\tau:{}& \zeta_p\mapsto \zeta_p^t,~z_1\mapsto z_2\mapsto\cdots\mapsto z_{p-2}\mapsto (z_1^{t^{p-2}} z_2^{t^{p-3}} \cdots z_{p-3}^{t^2} z_{p-2}^t z_{p-1}^s)^{-1}, \\
& z_{p-1}\mapsto z_1^p z_{p-1}^t,~ y_0\mapsto y_0
\end{align*}
and recall that $t^{p-1}=1+ps$ defined before.

It follows that $K(x_0,\ldots,x_{p-1})^G=K(\zeta_p)(M)^\pi (y_0)$ where $M$ is the $\pi$-lattice defined by $M=\bigoplus_{1\le i\le p-1} \bm{Z}\cdot w_i$
and $\tau:w_1\mapsto w_2\mapsto\cdots\mapsto w_{p-2}\mapsto -(t^{p-2}\cdot w_1+t^{p-3}\cdot w_2+\cdots+t^2\cdot w_{p-3}+t\cdot w_{p-2}+s\cdot w_{p-1})$,
$w_{p-1}\mapsto p\cdot w_1+t\cdot w_{p-1}$.

Apply Theorem \ref{t2.5}.
We conclude that $K(x_0,\ldots,x_{p-1})^G$ is stably rational over $K$ if and only if $[M]^{fl}=0$ in $F_\pi$.

\medskip
Step 2.
Now consider the case where $k$ is an algebraic number field and $p\in P\backslash (P_0\cup P_k)$.
We will show that $k(x_0,\ldots,x_{p-1})^G$ is not stably rational over $k$.

Since $p\notin P_k$, by Lemma \ref{l2.6}, we find that $\fn{Gal}(k(\zeta_p)/k)\simeq C_{p-1}$, which is isomorphic to $\fn{Gal} (\bm{Q}(\zeta_p)/\bm{Q})$.
Call this group $\pi$.

Apply Step 1.
We find that we arrive at the same $\pi$-lattice $M$ for $k(x_0,\ldots,x_{p-1})^G$ and $\bm{Q}(x_0,\ldots,x_{p-1})^G$.

From Theorem \ref{t1.1}, $\bm{Q}(x_0,\ldots,x_{p-1})^G$ is not rational.
Thus it is not stably rational over $\bm{Q}$ by \cite[Proposition 5.6]{Le}.
It follows that $[M]^{fl}\ne 0$ in $F_\pi$ by the conclusion of Step 1.

Apply Theorem \ref{t2.5} to $k(\zeta_p)(M)^\pi$ and use the above result that $[M]^{fl}\ne 0$.
We find that $k(x_0,\ldots,x_{p-1})^G=k(\zeta_p)(M)^\pi$ is not stably rational over $k$.
\end{proof}

\begin{remark}
Because $\pi$ is a cyclic group, any flabby $\pi$-lattice is invertible by Endo and Miyata \cite[Theorem 1.5]{EM2}.
Applying Theorem \ref{t2.5}, we find that $K(\zeta_p)(M)^\pi$ is retract rational over $K$ in Step 1.
More generally, it can be shown that $K(x_0,\ldots,x_{p-1})^G$ is retract rational over $K$ for any field $K$ and for any prime number $p$. The proof is omitted.
\end{remark}

\begin{proof}[Proof of Theorem \ref{t1.3}] ~ \par

The proof is almost the same as that of Theorem \ref{t1.2}, except that we apply Lemma \ref{l2.7} this time.
\end{proof}

%\clearpage
%----------------------------------------References

\end{document}